\newcommand{\nc}{\newcommand}
\nc{\dmo}{\DeclareMathOperator}
\dmo{\ra}{\rightarrow}
\dmo{\N}{\mathbb{N}}
\dmo{\F}{\mathbb{F}}
\dmo{\Z}{\mathbb{Z}}
\dmo{\R}{\mathbb{R}}
\dmo{\C}{\mathcal{C}}
\dmo{\AC}{\mathcal{AC}}
\dmo{\Mod}{Mod}
\dmo{\PMod}{PMod}
\dmo{\B}{B}
\dmo{\PB}{PB}
\dmo{\GL}{GL}
\dmo{\SL}{SL}
\dmo{\Sp}{Sp}
\dmo{\I}{\mathcal{I}}
\dmo{\el}{\ell_{\C}}
\dmo{\NN}{\mathcal{N}}
\dmo{\Tr}{Tr} 
\dmo{\rk}{rk}
\dmo{\Aut}{Aut}
\dmo{\Inn}{Inn}
\dmo{\Teich}{Teich}
\dmo{\Ind}{Ind}
\dmo{\cd}{cd}
\dmo{\forget}{Forget}
\dmo{\Homeo}{Homeo}
\dmo{\Out}{Out}
\dmo{\Diffeo}{Diffeo}
\dmo{\push}{Push}
\dmo{\capd}{Cap}
\dmo{\CG}{CG}
\dmo{\UCG}{UCG}
\dmo{\FCG}{FCG}
\dmo{\PFB}{PFB}
\dmo{\BG}{B}
\dmo{\PBG}{PB}
\dmo{\NRH}{NRH}
\dmo{\st}{st}
\dmo{\lk}{lk}
\nc{\nt}{\newtheorem}
\newtheorem{thm}{{\bf Theorem}}[section]
\newtheorem{lem}[thm]{{\bf Lemma}}
\newtheorem{prop}[thm]{{\bf Proposition}}
\numberwithin{equation}{section}
\newtheorem*{rep@theorem}{\rep@title}
\newcommand{\newreptheorem}[2]{%
\newenvironment{rep#1}[1]{%
 \def\rep@title{#2 \ref{##1}}%
 \begin{rep@theorem}}%
 {\end{rep@theorem}}}
\newcommand*{\rom}[1]{\expandafter\@slowromancap\romannumeral #1@}
\theoremstyle{definition}
\newtheorem*{ack}{Acknowledgements}
\title[(Outer) automorphism groups of RAAGs are almost NRH]
{Automorphism and outer automorphism groups of Right-angled Artin groups are not relatively hyperbolic}
\author{Junseok Kim}
\address{%
        Department of Mathematical Sciences, KAIST,
		291 Daehak-ro Yuseong-gu, Daejeon, 34141, South Korea 
}
\email{%
        jsk8818@kaist.ac.kr
}
\author{Sangrok Oh}
\address{%
        Department of Mathematics, University of the Basque Country,
        Barrio Sarriena, Leioa, 48940, Spain
}
\email {%
sangrokoh.math@gmail.com}
\author{Philippe Tranchida}
\address{%
		Department of Mathematical Sciences, KAIST,
		291 Daehak-ro Yuseong-gu, Daejeon, 34141, South Korea 
}
\email{%
        ptranchi@kaist.ac.kr
}
\begin{document}

\maketitle

\begin{abstract}
We show that the automorphism groups of right-angled Artin groups whose defining graphs have at least 3 vertices are not relatively hyperbolic. We then show that the outer automorphism groups are not relatively hyperbolic, if they are not virtually isomorphic to a right-angled Artin group whose defining graph is either a single vertex or disconnected. 
\end{abstract}

\section{Introduction}
Associated with a finite simplicial graph $\Gamma$ whose vertex set and edge set are $V$ and $E$, respectively, is the right-angled Artin group (RAAG) $A_\Gamma$ which is defined by the following group presentation: $$A_\Gamma = \langle v \in V \mid [u, v] = 1 ~\text{for}~\{u, v\} \in E \rangle.$$
In these settings, $\Gamma$ is said to be the defining graph of $A_\Gamma$.
As extreme examples, RAAGs can be free abelian groups $\Z^n$, when the defining graphs are complete, or free groups $\F_n$, when the defining graphs have no edges.
In contrast, generic RAAGs have interesting behaviors; for example, some of their subgroups may not be isomorphic to RAAGs. Subgroups of RAAGs, such as Bestvina--Brady groups \cite{Bestvina1997MorseTA}, are actually quite wild and have been used to construct examples of groups with peculiar properties. For a brief introduction to RAAGs, we refer to Charney's note \cite{Cha}.

\thispagestyle{empty}
In this paper, we look at the automorphism and outer automorphism groups of $A_\Gamma$ denoted by $\Aut(A_\Gamma)$ and $\Out(A_\Gamma)$, respectively (the inner automorphism group is denoted by $\Inn(A_\Gamma)$). $\Out(\Z^n)$ will usually be identified with  $\GL_n(\Z)$.
Even though $\Z^n$ and $\F_n$ have a lot of opposite properties in the algebraic sense, $\GL_n(\Z)$ and $\Out(\F_n)$ share many common properties: for example, both of them are virtually torsion-free, residually finite, and have finite virtual cohomological dimension. Charney and Vogtmann extended these results to every $\Out(A_{\Gamma})$ in their papers \cite{charney2009finiteness} and \cite{charney2011subgroups}.

Another interesting common feature of $\GL_n(\Z)$ and $\Out(\F_n)$ is that they are not relatively hyperbolic, except when $n$ is small enough, in which case they are actually hyperbolic.
Anderson--Aramayona--Shackleton \cite{simplecriterion} established a simple criterion for showing non-relative hyperbolicity of groups generated by infinite order elements.
Using this criterion, they proved that, as long as $n \geq 3$, $\GL_n(\Z)$ and $\Out(\F_n)$, even $\Aut(\F_n)$ are not relatively hyperbolic; see also Behrstock--Druţu--Mosher \cite{BDM09}.
It is then quite natural to ask whether $\Aut(A_\Gamma)$ and $\Out(A_\Gamma)$ are always not relatively hyperbolic. This turns out to be true, except for a few cases.

Here are the two main theorems of this paper. 

\begin{reptheorem}{Main1}
If a finite simplicial graph $\Gamma$ contains at least three vertices, then the automorphism group of the right-angled Artin group of $\Gamma$ is not relatively hyperbolic.
% The automorphism groups of right-angled Artin groups are not relatively hyperbolic, as long as the defining graph $\Gamma$ has at least $3$ vertices.
\end{reptheorem}

\begin{reptheorem}{Main2}
If the outer automorphism group of a right-angled Artin group is infinite and relatively hyperbolic, then it is virtually a right-angled Artin group whose defining graph is either a single vertex or disconnected.
% For all right-angled Artin groups except few cases, the outer automorphism groups are not relatively hyperbolic.
\end{reptheorem}

We remark that even though $\Aut(A_{\Gamma})$ is almost never relatively hyperbolic, Genevois proved in \cite{genevois2020automorphisms} that $\Aut(A_{\Gamma})$ is acylindrically hyperbolic if and only if $\Gamma$ is not a join and contains at least $2$ vertices.

The definition and study of relatively hyperbolic groups come from the following observation: even when a group $G$ fails to be hyperbolic, it might still exhibit hyperbolic behaviours if we look only ``outside" of some proper subgroups, called parabolic subgroups. 
With this observation in mind, one obstruction for being relatively hyperbolic is the existence of a specific collection $\mathcal{A}$ of proper subgroups which are far from being hyperbolic (for example, free abelian subgroups) and are well-networked. 
The term `well-networked' means that (1) the union of all the subgroups in $\mathcal{A}$ generates a finite index subgroup of $G$, (2) for any $A,A'\in\mathcal{A}$, there exists a sequence $A_1=A,\cdots,A_n=A'$ such that $A_i\cap A_{i+1}$ is infinite. 
If we find such a collection of subgroups of $G$, then $G$ would be \emph{never} relatively hyperbolic regardless of the choice of parabolic subgroups.

Following the above idea, the notion of the \emph{commutativity graph} of a group $G$ is recalled in Section \ref{2.2} as a tool to show that $G$ is not relatively hyperbolic.
One of the main assumptions to define the commutativity graph is the existence of a (possibly infinite) generating set of $G$ which consists of infinite order elements. However, there are finite order elements in the usual generating sets of $\Aut(A_\Gamma)$ and $\Out(A_\Gamma)$.
To handle this problem, in Section \ref{2.1}, we find a finite index subgroup which is generated by a finite collection of infinite order elements. 
In Section \ref{3} and \ref{4}, we prove that $\Aut(A_\Gamma)$ and $\Out(A_\Gamma)$ are in general not relatively hyperbolic, respectively, by using those finite-index subgroups with the fact that being (or not being) relatively hyperbolic is a quasi-isometry invariant.

\begin{ack} We thank our advisor Hyungryul Baik for useful comments, Donggyun Seo for useful conversations, and Anthony Genevois for giving us helpful feedback. The first and third authors were partially supported by Samsung Science \& Technology Foundation grant No. SSTF-BA1702-01 and the National Research Foundation of Korea(NRF) grant funded by the Korea government(MSIT) (No. 2020R1C1C1A01006912).
\end{ack}

\section{Preliminaries}\label{2}
In this paper, $\Gamma$ is always assumed to be a finite simplicial graph with vertex set $V$.  
For a vertex $v \in V$, the \emph{link} of $v$, denoted by $\lk(v)$, is the full subgraph of $\Gamma$ spanned by vertices adjacent to $v$. 
Similarly, the \emph{star} of $v$, denoted by $\st(v)$, is the full subgraph of $\Gamma$ spanned by vertices adjacent to $v$ and $v$ itself. 
We then say that $v \leq w$ if $\lk(v) \subset \st(w)$. This partial order induces an equivalence relation on $V$ by setting $v \sim w$ if $v\leq w$ and $w \leq v$. 
The partial order then descends to a partial order on the collection of the equivalence classes of vertices by setting $[v] \leq [w]$ if for some, and thus all, representatives $v' \in [v]$ and $w' \in [w]$, we have $v' \leq w'$. A vertex $v \in V$ is maximal if any vertex $w$ such that $v \leq w$ is actually equivalent to $v$.

\subsection{(Outer) automorphism groups of RAAGs}\label{2.1}
A theorem which was conjectured by Servatius \cite{SERVATIUS} and proved by Laurence \cite{Laurence} says that $\Aut(A_{\Gamma})$ is generated by the following four classes of automorphisms:
\begin{itemize}
    \item \textbf{Graph automorphisms}. An automorphism of $\Gamma$ induces an automorphism of $A_{\Gamma}$ since it preserves the edges of $\Gamma$ and thus the relations of $A_{\Gamma}$. The automorphism obtained this way is called a \emph{graph automorphism}.
    \item \textbf{Inversions}. An automorphism of $A_{\Gamma}$ by sending one generator $v$ to its inverse $v^{-1}$ is called an \emph{inversion}.
    \item \textbf{Transvections}. Take two vertices $v$ and $w$ in $\Gamma$ such that $v \leq w$. Then the automorphism sending $v$ to $vw$ and fixing all the other vertices is called a \emph{right transvection} and is denoted by $R_
    {vw}$. We can similarly define a \emph{left transvection} $L_{vw}$ by sending $v$ to $wv$ and still fixing all the other vertices.
    \item \textbf{Partial conjugations}. For a vertex $v\in\Gamma$, let $C$ be a connected component of $\Gamma - \st(v)$. The automorphism defined by conjugating every vertex in $C$ by $v$ is called a \emph{partial conjugation} and is denoted by $P_v^C$. If a component $C$ of $\Gamma - \st(v)$ is composed of a single vertex $w$, we write $P_v^w$ instead of $P_v^{\{w\}}$.
\end{itemize}

Note that inversions and graph automorphisms have finite order, but transvections and partial conjugations have infinite order.
Let $\Aut^*(A_{\Gamma})$ be the subgroup of $\Aut(A_{\Gamma})$ generated only by transvections and partial conjugations. Then we can easily deduce the following:

\begin{lem}\label{FI}
$\Aut^*(A_{\Gamma})$ is a finite index normal subgroup of $\Aut(A_{\Gamma})$.
\end{lem}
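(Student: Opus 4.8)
The plan is to prove the two assertions — normality and finite index — essentially in one stroke, by isolating the finite-order part of Laurence's generating set. First I would observe that the graph automorphisms together with the inversions generate a finite subgroup $Q \leq \Aut(A_{\Gamma})$: the graph automorphisms form a copy of $\Aut(\Gamma)$, which is finite since $\Gamma$ has finitely many vertices, while the inversions are commuting involutions generating a copy of $(\Z/2\Z)^{|V|}$, and the graph automorphisms merely permute them, so $Q$ is a quotient of $(\Z/2\Z)^{|V|} \rtimes \Aut(\Gamma)$. By Laurence's theorem, $\Aut(A_{\Gamma}) = \langle \Aut^*(A_{\Gamma}), Q\rangle$, so once I know $\Aut^*(A_{\Gamma})$ is normal, the quotient equals $Q\Aut^*(A_{\Gamma})/\Aut^*(A_{\Gamma}) \cong Q/(Q \cap \Aut^*(A_{\Gamma}))$, a quotient of the finite group $Q$, hence finite. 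Thus the entire difficulty is concentrated in the normality statement.

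For normality, it suffices to check that each generator of $\Aut(A_{\Gamma})$ conjugates each generating transvection and partial conjugation back into $\Aut^*(A_{\Gamma})$. Conjugation by a transvection or partial conjugation is automatic, since these already lie in $\Aut^*(A_{\Gamma})$, so only conjugation by graph automorphisms and by inversions needs verification. Conjugation by a graph automorphism $\phi$ is the clean case: since $\phi$ preserves $\lk$ and $\st$ it preserves the partial order $\leq$ and carries connected components of $\Gamma - \st(v)$ to connected components of $\Gamma - \st(\phi(v))$, so I expect $\phi R_{vw} \phi^{-1} = R_{\phi(v)\phi(w)}$ and $\phi P_v^C \phi^{-1} = P_{\phi(v)}^{\phi(C)}$, both again in $\Aut^*(A_{\Gamma})$.

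The inversions require a short case analysis, which is where the genuine bookkeeping lies. Writing $\iota_x$ for the inversion of the generator $x$, I would compute $\iota_x R_{vw} \iota_x^{-1}$ according to whether $x \in \{v, w\}$ or not, and $\iota_x P_v^C \iota_x^{-1}$ according to whether $x = v$, $x \in C$, or neither. In each case the conjugate is again a transvection (possibly a left transvection or an inverse) or the same partial conjugation (or its inverse): for instance $\iota_w R_{vw} \iota_w^{-1} = R_{vw}^{-1}$ and $\iota_v R_{vw} \iota_v^{-1} = L_{vw}^{-1}$, while $\iota_v P_v^C \iota_v^{-1} = (P_v^C)^{-1}$ and $\iota_x P_v^C \iota_x^{-1} = P_v^C$ when $x \in C$. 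Since $\Aut^*(A_{\Gamma})$ is closed under inverses, every conjugate lands back inside it, establishing normality.

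The main obstacle I anticipate is not conceptual but the careful sign-tracking in the inversion computations: one must apply $\iota_x^{-1}$, then the transvection or partial conjugation, then $\iota_x$, and correctly read off which of $\{R, L, R^{-1}, L^{-1}\}$ (respectively $\{P_v^C, (P_v^C)^{-1}\}$) results, keeping straight that $v \notin C$ always holds because $C \subset \Gamma - \st(v)$. Once this table is filled in, normality, and hence finite index, follow immediately.
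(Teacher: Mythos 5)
Your proposal is correct and follows essentially the same route as the paper: normality is checked by verifying that conjugates of transvections and partial conjugations by inversions and graph automorphisms land back in $\Aut^*(A_{\Gamma})$ (your explicit formulas such as $\iota_v R_{vw}\iota_v^{-1}=L_{vw}^{-1}$ and $\iota_v P_v^C\iota_v^{-1}=(P_v^C)^{-1}$ all check out), and finite index then follows because the quotient is generated by the images of the finite-order generators. You merely spell out in more detail why that quotient is finite, via the finite subgroup $Q$.
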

\begin{proof}
The fact that $\mathrm{Aut}^*(A_{\Gamma})$ is a normal subgroup can be shown by checking that the conjugate of a transvection or a partial conjugation by an inversion or a graph automorphism is still a transvection or a partial conjugation. 
Since the image of the quotient map $\mathrm{Aut}(A_{\Gamma})\rightarrow \mathrm{Aut}(A_\Gamma)/\mathrm{Aut}^*(A_{\Gamma})$ is generated by the images of graph automorphisms and inversions, its cardinality is smaller than or equal to the one of the subgroup generated by inversions and graph automorphisms, which is finite. We can thus deduce that $\mathrm{Aut}^*(A_{\Gamma})$ is of finite index.
\end{proof}

Under the quotient map $\Aut(A_\Gamma)\rightarrow \Out(A_\Gamma):=\Aut(A_\Gamma)/\Inn(A_\Gamma)$, $\Out(A_\Gamma)$ is generated by the images of graph automorphisms, inversions, transvections and partial conjugations.
(Note that some images of partial conjugations may be trivial in $\Out(A_\Gamma)$.)
Similarly, $\Out^*(A_{\Gamma})$ is defined to be the subgroup generated by the images of transvections and partial conjugations in $\Out(A_\Gamma)$; it can be considered as the image of $\Aut^*(A_{\Gamma})$ in $\Out(A_{\Gamma})$. By the above lemma, $\Out^*(A_{\Gamma})$ is also a finite index normal subgroup of $\Out(A_{\Gamma})$.

\subsection{Non-relative hyperbolicity}\label{2.2}
We first recall the definition of relative hyperbolicity due to Bowditch \cite{Bow}.
Let $G$ be a finitely generated group and $\mathcal{H}$ a finite collection of proper finitely generated subgroups of $G$.
Choose a finite generating set $S$ of $G$ and consider the Cayley graph $\Lambda = \Lambda(G,S)$.
The \emph{coned-off Cayley graph $\hat{\Lambda}(G,\mathcal{H})$} is defined as follow: starting with the Cayley graph $\Lambda$, for each coset $gH_i$ with $g\in G$, $H_i\in\mathcal{H}$, we add a vertex $v(gH_i)$ to $\Lambda$ and connect $v(gH_i)$ by an edge to each vertex in $gH_i$.
We then say that $G$ is \emph{relatively hyperbolic with respect to $\mathcal{H}$} if 
\begin{itemize}
    \item the coned-off Cayley graph $\hat{\Lambda}(G,\mathcal{H})$ is $\delta$-hyperbolic and
    \item $\hat{\Lambda}(G,\mathcal{H})$ is \emph{fine}. This means that for each integer $k$, all edges $e$ of $\hat{\Lambda}(G,\mathcal{H})$ are contained in finitely many simple cycles of length $k$.
\end{itemize}
Whenever $\hat{\Lambda}(G,\mathcal{H})$ satisfies the first of the above conditions, an element in $\mathcal{H}$ is said to be a \emph{parabolic\ subgroup} of $G$ and $G$ is said to be \emph{weakly relatively hyperbolic} (w.r.t. $\mathcal{H}$).
If $G$ is not relatively hyperbolic with respect to any choice of a finite collection of proper finitely generated subgroups $\mathcal{H}$, then it is said to be \emph{not relatively hyperbolic}.

There are two necessary conditions for parabolic subgroups of relatively hyperbolic groups. Let $G$ be a finitely generated group which is relatively hyperbolic with respect to a finite collection $\mathcal{H}=\{H_i\}$ of parabolic subgroups.
The first one is Theorem 1.4 in \cite{Osin06} about virtual malnormality of parabolic subgroups. 
\begin{thm}[\cite{Osin06}]\label{VirtualMalnormal}
For $H_i,H_j\in\mathcal{H}$ and $g_1,g_2\in G$, $g_1H_ig^{-1}_1\cap g_2H_jg^{-1}_2$ is finite if either $H_i$ and $H_j$ are distinct or $H_i=H_j$ and $g^{-1}_1g_2\notin H_i$.
\end{thm}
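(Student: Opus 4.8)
The plan is to reinterpret the intersection group-theoretically as a common stabilizer and then exploit the fineness of the coned-off Cayley graph. First I would note that $G$ acts on $\hat{\Lambda}(G,\mathcal{H})$ on the left, and that under this action the stabilizer of a cone vertex $v(gH_i)$ is exactly $gH_ig^{-1}$, since $x\cdot gH_i=gH_i$ as a coset precisely when $g^{-1}xg\in H_i$. Hence the group in question,
\[
K := g_1H_ig_1^{-1}\cap g_2H_jg_2^{-1},
\]
is the subgroup of $G$ fixing both cone vertices $p:=v(g_1H_i)$ and $q:=v(g_2H_j)$. The two stated hypotheses --- either $H_i\neq H_j$, or $H_i=H_j$ with $g_1^{-1}g_2\notin H_i$ --- are precisely the conditions under which $g_1H_i\neq g_2H_j$ as cosets, i.e. under which $p\neq q$ (when $i\neq j$ the cone vertices are even indexed separately). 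So the whole statement reduces to showing that the common stabilizer of two \emph{distinct} cone vertices is finite.

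Next I would fix a geodesic $\gamma=(p=x_0,x_1,\dots,x_d=q)$ in $\hat{\Lambda}$. Here $d\geq 2$, because distinct cone vertices are never adjacent (every edge incident to a cone vertex ends at an ordinary Cayley vertex); in particular $x_1$ is a vertex of the original Cayley graph, i.e. an element of $G$. The key step is to show that $k\mapsto k\gamma$ is an injection from $K$ into the set of geodesics from $p$ to $q$. Each $k\in K$ is an isometry with $kp=p$ and $kq=q$, so $k\gamma$ is again a geodesic from $p$ to $q$; and if $k\gamma=k'\gamma$ as paths, then $(k')^{-1}k$ fixes every $x_m$, in particular it fixes $x_1\in G$. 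Since left multiplication acts freely on the Cayley vertices, this forces $(k')^{-1}k=1$, giving injectivity.

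Finally I would invoke fineness. Geodesics from $p$ to $q$ are arcs (simple paths) of the fixed length $d$, and fineness of $\hat{\Lambda}$ is equivalent to the assertion that between any two vertices there are only finitely many arcs of each bounded length (see \cite{Bow}). Combined with the injection above, this bounds $|K|$ and shows $K$ is finite. I expect the fineness step to be the main obstacle: the cone vertices have infinite valence, so $\hat{\Lambda}$ is far from locally finite and one cannot count paths naively. The real content is passing from the definition of fineness used here (each edge lies in finitely many simple cycles of each length) to the arc-counting form, for which the standard move extracts, from infinitely many arcs $p\to q$, infinitely many short simple cycles through a single edge by examining their first points of divergence and reconvergence. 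It is worth emphasizing that this argument uses only fineness together with connectedness and the free action on Cayley vertices --- the $\delta$-hyperbolicity of $\hat{\Lambda}$ plays no role, which fits the principle that malnormality-type conclusions come from the fineness side of relative hyperbolicity.
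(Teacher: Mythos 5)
The paper offers no proof of this statement; it is imported verbatim as Theorem 1.4 of \cite{Osin06}, where it is derived from the machinery of relative presentations and relative Dehn functions (analysis of components of cycles penetrating cosets in the relative Cayley graph). Your argument is a correct and essentially self-contained alternative that works directly inside Bowditch's formulation --- the one actually adopted in Section \ref{2.2} of this paper --- and in that sense it fits the paper's framework better than the cited source does. The three steps are all sound: the identification of $g_1H_ig_1^{-1}\cap g_2H_jg_2^{-1}$ with the stabilizer of the pair of cone vertices (and the check that the stated hypotheses are exactly the condition $g_1H_i\neq g_2H_j$, reading the paper's ``$g_1^{-1}g_2\neq H_i$'' as the evident typo for $g_1^{-1}g_2\notin H_i$); the injection $k\mapsto k\gamma$ into geodesics from $p$ to $q$, using that a neighbour of a cone vertex is a Cayley vertex and that $G$ acts freely on Cayley vertices; and the finiteness of the set of arcs of length $d$ between two fixed vertices. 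You are right that the last step is where the real work sits: the definition of fineness used here counts simple cycles through an edge, and the equivalence with the arc-counting form is precisely Proposition 2.1 of \cite{Bow}, provable by the divergence/reconvergence extraction you sketch. Your closing remark is also accurate and worth keeping: $\delta$-hyperbolicity of the coned-off graph is never used, so almost-malnormality of parabolics is a consequence of fineness alone.
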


In particular, this implies that parabolic subgroups are almost malnormal.

The second one is a statement which slightly generalizes Lemma 5 in \cite{simplecriterion}.

\begin{lem}\label{ParabolicSubgroup}
Suppose $H$ is a subgroup of $G$ isomorphic to a RAAG $A_\Gamma$ whose defining graph $\Gamma$ is connected. Then $H$ is contained in a conjugate of a parabolic subgroup $H_i\in\mathcal{H}$.
\end{lem}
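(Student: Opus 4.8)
The plan is to show that every standard generator of $H\cong A_\Gamma$ lies in one common parabolic subgroup $P$, i.e. one common conjugate $gH_ig^{-1}$ of an element of $\mathcal{H}$; since these generators generate $H$, this yields $H\leq P$, which is the assertion (a conjugate of some $H_i$ being what we mean by a parabolic subgroup). Write $v_1,\dots,v_n$ for the vertices of $\Gamma$, viewed as generators of $A_\Gamma$; each has infinite order, and $v_i$ commutes with $v_j$ in $H$ exactly when $\{v_i,v_j\}$ is an edge of $\Gamma$. Since $\Gamma$ is connected we may assume it has an edge, the degenerate case $H\cong\Z$ being irrelevant here.

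First I would settle the base case using the result being generalized. Fix an edge $\{a,b\}$ of $\Gamma$, so that $\langle a,b\rangle\cong\Z^2\leq G$. By Lemma~5 of \cite{simplecriterion} this $\Z^2$ is contained in a parabolic subgroup $P=gH_ig^{-1}$; in particular the infinite-order element $a$ lies in $P$. The reason such a $\Z^2$ must be parabolic is that a loxodromic element of a relatively hyperbolic group has virtually cyclic maximal elementary subgroup: if $a$ were loxodromic with boundary fixed-point pair $\{a^+,a^-\}$, then $bab^{-1}=a$ would force $b$ to preserve $\{a^+,a^-\}$, placing $\langle a,b\rangle\cong\Z^2$ inside a virtually cyclic group, a contradiction.

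The engine of the argument is the following propagation step, which uses only Theorem~\ref{VirtualMalnormal}: if $x\in P$ has infinite order and $c\in G$ commutes with $x$, then $c\in P$. Indeed $cxc^{-1}=x\in P$, so $x\in P\cap cPc^{-1}$ and this intersection is infinite; writing $P=gH_ig^{-1}$ and $cPc^{-1}=(cg)H_i(cg)^{-1}$ and applying Theorem~\ref{VirtualMalnormal} with $H_i=H_j$, $g_1=g$ and $g_2=cg$, infiniteness of the intersection forces $g^{-1}cg\in H_i$, that is $c\in gH_ig^{-1}=P$. I would then propagate through $\Gamma$ by connectedness: let $S\subseteq V$ be the set of vertices already known to lie in $P$, which is non-empty as it contains $a$ (and $b$). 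If $S\neq V$, connectedness gives an edge $\{u,w\}$ with $u\in S$ and $w\notin S$; since $u\in P$ has infinite order and $w$ commutes with $u$, the propagation step puts $w\in P$, enlarging $S$. Iterating gives $S=V$, hence $H=\langle v_1,\dots,v_n\rangle\leq P$.

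The crux of the argument is the \emph{coherence} of the propagation: a priori different vertices could be swept into different conjugates of different peripheral subgroups, and what rules this out is precisely the almost malnormality of Theorem~\ref{VirtualMalnormal}, which forces any two parabolics sharing an infinite-order element to coincide. The genuinely external input is the base case, Lemma~5 of \cite{simplecriterion}, producing the first parabolic element from the $\Z^2$ of an edge; this is where the structural fact that $\Z^2$ cannot act loxodromically is needed. Connectedness of $\Gamma$ is exactly the hypothesis that lets membership in $P$ spread from one vertex to all of them, so the argument would indeed break for disconnected $\Gamma$, whose distinct components could lie in distinct parabolics.
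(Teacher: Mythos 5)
Your proof is correct and follows essentially the same route as the paper: an edge of $\Gamma$ gives a $\Z^2$ that must lie in a parabolic (the content of Lemma~5 of \cite{simplecriterion} / Osin's Theorems 4.16 and 4.19, which is what the paper cites), and connectedness together with the almost malnormality of Theorem~\ref{VirtualMalnormal} forces all vertices into the same conjugate. The paper leaves the propagation step implicit; you have simply written it out in full.
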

\begin{proof}
This is a direct consequence of Theorem 4.16 and Theorem 4.19 in \cite{Osin06}, stating that a free abelian subgroup of rank 2 has to be contained in a conjugate of $H_i\in\mathcal{H}$. Since the subgroup generated by the end points of each edge of $\Gamma$ is a free abelian subgroup of rank 2, we can deduce that $H$ is contained in a conjugate of a parabolic subgroup. 
\end{proof}

From these necessary conditions of parabolic subgroups, a simple criterion for detecting non-relative hyperbolicity is developed as in \cite{simplecriterion}.
Let $G$ be a group and $S$ be a (possibly infinite) generating set consisting only of infinite order elements. The \emph{commutativity graph} $K(G,S)$ of $G$ with respect to $S$ is the simplicial graph with vertex set $S$ and in which two distinct vertices $s$ and $s'$ are connected by an edge if there exist integers $n_s$ and $n_{s'}$ such that $\langle s^{n_s},(s')^{n_{s'}}\rangle$ is abelian.
The main theorem of \cite{simplecriterion} is then the following: 

\begin{thm}\cite{simplecriterion}\label{Criterion}
Let $G$ be a finitely generated group and $S$ be a (possibly infinite) generating set of $G$ which consists of infinite order elements and contains at least $2$ elements. Suppose that $K(G,S)$ is connected and that there are at least $2$ vertices $s$ and $s'$ in $K(G,S)$ such that $\langle s^{n_s},(s')^{n_{s'}}\rangle$ is a rank two free abelian group for some integers $n_s$ and $n_{s'}$. Then, $G$ is not relatively hyperbolic.
\end{thm}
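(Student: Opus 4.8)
The plan is to argue by contradiction: suppose $G$ is relatively hyperbolic with respect to a finite collection $\mathcal{H}=\{H_i\}$ of proper parabolic subgroups. The engine of the proof is a single principle extracted from Theorem \ref{VirtualMalnormal}: if $g\in G$ has infinite order and $g^{n}\in H_i$ for some $n\neq 0$, then already $g\in H_i$. Indeed, $g^{n}=g\,g^{n}g^{-1}\in H_i\cap gH_ig^{-1}$ has infinite order, so this intersection is infinite, and Theorem \ref{VirtualMalnormal} (with $g_1=e$, $g_2=g$, and $H_i=H_j$) then forces $g\in H_i$. The same theorem, applied with $g_1=g_2=e$, shows that two distinct members of $\mathcal{H}$ meet in a finite group, so an infinite order element of $G$ belongs to at most one $H_i$.

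Next I would seed the argument using the distinguished edge. For the pair $s,s'$ with $\langle s^{n_s},(s')^{n_{s'}}\rangle$ free abelian of rank two, Lemma \ref{ParabolicSubgroup} places this subgroup inside a single $H_i\in\mathcal{H}$, which I call $H$ (this is legitimate since $\Z^2$ is the RAAG of a single edge, a connected graph). In particular $s^{n_s},(s')^{n_{s'}}\in H$, and since $s,s'$ have infinite order the principle above gives $s,s'\in H$.

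The core step is to propagate membership in $H$ along the edges of $K(G,S)$: I claim that if $\{t,u\}$ is an edge and $t\in H$, then $u\in H$. By definition of $K(G,S)$ there are nonzero integers $a,b$ with $\langle t^{a},u^{b}\rangle$ abelian, and there are two cases. If this subgroup has rank two, then Lemma \ref{ParabolicSubgroup} puts it inside some $H_j\in\mathcal{H}$; but $t^{a}\in H\cap H_j$ has infinite order, so $H_j=H$ by the uniqueness noted above, giving $u^{b}\in H$ and hence $u\in H$. If it has rank one, then it is virtually cyclic, so projecting $t^{a}$ and $u^{b}$ to its free part and clearing torsion yields nonzero integers $p,q$ with $t^{p}=u^{q}$; since $t\in H$ we get $u^{q}=t^{p}\in H$, and the principle again yields $u\in H$. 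Because $K(G,S)$ is connected and $s\in H$, an induction over a spanning tree then shows that every vertex of $K(G,S)$, i.e. every element of $S$, lies in $H$.

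Finally, since $S$ generates $G$, I conclude $G=\langle S\rangle\le H$, so $H=G$, contradicting that parabolic subgroups are proper; hence $G$ is not relatively hyperbolic. I expect the genuine difficulty to sit in the rank one case of the propagation: a commuting pair of infinite order elements generating only a virtually cyclic group is not, on its own, trapped in a parabolic, so the step hinges on the elementary observation that such a pair must share a nontrivial power. The only real bookkeeping is passing from ``$t^{a}$ and $u^{b}$ differ by a torsion element'' to an honest relation $t^{p}=u^{q}$ with $p,q\neq 0$; once that is in hand, every edge transmits membership in $H$ and the rest is formal.
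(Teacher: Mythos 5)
The paper does not actually prove this theorem---it is quoted from \cite{simplecriterion}---but your argument is correct and is essentially the intended one: it assembles exactly the two ingredients the paper records just beforehand (almost malnormality of parabolics from Theorem \ref{VirtualMalnormal}, and absorption of $\Z^2$ subgroups into a parabolic from Lemma \ref{ParabolicSubgroup}) to propagate membership in a single parabolic $H$ along the connected commutativity graph, forcing $H=G$ and contradicting properness. Your treatment of the rank-one case is also sound, since a two-generated abelian group of rank one forces the two infinite-order generators to share a common nonzero power, which transmits membership in $H$ via the virtual malnormality principle.
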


Finally, due to the result of Druţu, in order to know whether a finitely generated group $G$ is relatively hyperbolic or not, we may look at other groups quasi-isometric to $G$ (for example, finite index subgroups).

\begin{thm}\cite{Drutu06relativelyhyperbolic}\label{QII}
In the class of finitely generated groups, being relatively hyperbolic is a quasi-isometry invariant.
\end{thm}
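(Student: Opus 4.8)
The plan is to deduce this invariance from the characterization of relative hyperbolicity in terms of asymptotic cones, following the approach of Druţu and Sapir. The central notion is that of a \emph{tree-graded space}: a complete geodesic metric space $X$ is tree-graded with respect to a collection $\mathcal{P}$ of closed geodesic subsets (called \emph{pieces}) if any two distinct pieces meet in at most one point and every simple geodesic triangle in $X$ is contained in a single piece. One then says that a finitely generated group $Y$ is \emph{asymptotically tree-graded} with respect to a collection of subsets $\mathcal{A}$ if every asymptotic cone $\mathrm{Cone}_\omega(Y)$ is tree-graded with respect to the collection of limit subsets $\lim_\omega(A_n)$ arising from sequences $A_n\in\mathcal{A}$. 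The first step is to recall that $G$ is relatively hyperbolic with respect to $\mathcal{H}=\{H_i\}$ (in the sense of Bowditch, with the coned-off Cayley graph both $\delta$-hyperbolic and fine) if and only if $G$ is asymptotically tree-graded with respect to the collection of all left cosets $\{gH_i : g\in G,\, H_i\in\mathcal{H}\}$. This equivalence is the technical heart of the reduction: it translates both the hyperbolicity and the fineness requirements of the definition into a single intrinsic metric statement about $G$, visible in its asymptotic cones.

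The second step is to transport this structure along a quasi-isometry $f\colon G\to G'$. Since asymptotic cones are functorial and quasi-isometry invariant, $f$ induces a bi-Lipschitz homeomorphism $\mathrm{Cone}_\omega(G)\to\mathrm{Cone}_\omega(G')$ for every choice of ultrafilter and scaling sequence. I would then verify that the tree-graded structure is preserved under bi-Lipschitz homeomorphisms: the image of a piece is again a closed geodesic subset, and both the ``one-point intersection'' condition and the ``every simple triangle lies in a piece'' condition are purely metric and survive bi-Lipschitz distortion. Consequently $\mathrm{Cone}_\omega(G')$ is tree-graded with respect to the images of the pieces, so $G'$ is asymptotically tree-graded with respect to the transported collection $f(\{gH_i\})$.

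The third and most delicate step is to promote this geometric conclusion back to an honest peripheral subgroup structure on $G'$. Being asymptotically tree-graded with respect to an abstract collection of subsets is a priori weaker than being relatively hyperbolic with respect to a collection of \emph{subgroups}, so one must show that the transported peripheral subsets $f(gH_i)$ lie, up to finite Hausdorff distance, within cosets of finitely generated subgroups of $G'$, and that these cosets fall into finitely many orbits. This is the genuinely hard analytic core of the theorem: the peripheral subsets admit an \emph{intrinsic} geometric characterization inside an asymptotically tree-graded space, detected through the cone by suitable convexity and non-distortion properties, and one must prove that a quasi-isometry carries such subsets to within bounded distance of analogous subsets, then organize the latter into a subgroup peripheral structure. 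I expect this reconstruction, rather than the passage to cones, to be the main obstacle, since it requires controlling how $f$ interacts with the entire collection of peripheral cosets simultaneously and ruling out that the tree-graded decomposition could ``shift'' into a genuinely different subgroup structure on $G'$. Once this is established, $G'$ is relatively hyperbolic with respect to the reconstructed peripheral subgroups, which is exactly the asserted invariance (Theorem \ref{QII}).
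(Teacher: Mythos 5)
The paper does not prove this statement at all: it is quoted verbatim as a theorem of Dru\c{t}u, with the proof deferred entirely to the cited reference. Your outline faithfully reproduces the strategy of that reference (and of Dru\c{t}u--Sapir): characterize relative hyperbolicity as being asymptotically tree-graded with respect to the peripheral cosets, transport the tree-graded structure of the asymptotic cones through the bi-Lipschitz homeomorphisms induced by a quasi-isometry, and then carry out the genuinely difficult reconstruction step showing that the transported peripheral subsets lie at bounded Hausdorff distance from cosets of finitely many finitely generated subgroups of the target group. You correctly identify that last step as the analytic core; what you give is a roadmap rather than a self-contained argument, but since the statement is an imported theorem whose full proof occupies an entire paper in the literature, that is the appropriate level of detail, and no step of your plan is wrong.
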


\section{Automorphism group}\label{3}

The goal of this section is to prove that $\Aut(A_{\Gamma})$ is in general not relatively hyperbolic. 
In order to use Theorem \ref{Criterion}, we work with $\Aut^*(A_{\Gamma})$ instead of $\Aut(A_{\Gamma})$. Indeed, $\Aut^*(A_\Gamma)$ is generated by infinite order elements and is a finite index subgroup, by Lemma \ref{FI}. By Theorem \ref{QII}, it is then enough to show that $\Aut^*(A_{\Gamma})$ is not relatively hyperbolic. 

\begin{thm}\label{Main1}
Let $\Gamma$ be a graph which has at least $3$ vertices and $S$ the set of all transvections and partial conjugations in $\Aut(A_\Gamma)$. Then the commutativiy graph $K(\Aut^*(A_{\Gamma}), S)$ is connected. Hence, $\Aut(A_{\Gamma})$ is not relatively hyperbolic.
\end{thm}

\begin{proof}
The proof is divided into three steps. First, we show that, as long as they exist, any two transvections are joined by a path in $K=K(\Aut^*(A_{\Gamma}),S)$.
Then, we show that the same holds for any two partial conjugations. Finally, we show that any partial conjugation and transvection are joined by a path, as long as they exist.

\vspace{1.5mm}
\noindent\textbf{Claim 1}. If there are at least two distinct transvections, then any two transvections are joined by a path in $K$.
\vspace{1mm}

Let $a$ and $b$ be vertices in $\Gamma$ such that $a \leq b$. If $a$ and $b$ are adjacent, then $R_{ab}=L_{ab}$. Otherwise, $R_{ab}$ and $L_{ab}$ are distinct but $[R_{ab},L_{ab}]=1$, i.e. $R_{ab}$ and $L_{ab}$ are joined by an edge in $K$. (In both cases, $[R_{ab},L_{ab}]=1$.)
Thus, to prove the claim, we only need to show that there is a path in $K$ from $R_{ab}$ to either $R_{cd}$ or $L_{cd}$ for any two vertices $c,d\in\Gamma$ with $c\leq d$. There are five cases we need to handle.
\vspace{0.7mm}

\textbf{1)} If $c = a$, then $R_{ab}$ and $L_{ad}$ are joined by an edge since $[R_{ab},L_{ad}]=1$.
\vspace{0.7mm}

\textbf{2)} If $c \neq a,b$ and $d \neq a$, then $R_{ab}$ and $R_{cd}$ are joined by an edge since $[R_{ab},R_{cd}]=1$.
\vspace{0.7mm}

\textbf{3)} If $c \neq a,b$ and $d = a$, then $R_{ab}$ and $L_{ca}$ are joined by a path. Indeed, $c\leq d = a\leq b$ and thus, $c\leq b$. This means that there is a transvection $R_{cb}$. Then there is a path joining $R_{ab}$ and $L_{ca}$ since $[R_{ab},R_{cb}]=[R_{cb},L_{ca}]=1$.
\vspace{0.7mm}

\textbf{4)} If $c = b$ and $d \neq a,b$, then $R_{ab}$ and $R_{bd}$ are joined by a path. Indeed, $a\leq b\leq d$ and thus, $a\leq d$ so that there is a transvection $L_{ad}$. Then there is a path joining $R_{ab}$ and $R_{bd}$ since $[R_{ab},L_{ad}]=[L_{ad},R_{bd}]=1$.
\vspace{0.7mm}

\textbf{5)} If $c =b$ and $d =a$, then $a\sim b$. There are two cases:\\
\textbf{5-1)} The case that $a$ and $b$ are adjacent, i.e. $\st (a)=\st(b)$: In this case, we show that $R_{ab}$ and $R_{ba}$ are joined by a path. If the star $\st(a)$ does not cover the whole graph $\Gamma$, choose $v\in \Gamma - \st(a)$ and let $\Gamma_0$ be the component of $\Gamma - st(a)$ containing $v$. Note that $\Gamma_0$ does not contain $a$ and $b$. Then we have
\[[R_{ab},P_b^{\Gamma_0}]=[P_b^{\Gamma_0},P_a^{\Gamma_0}]=[P_a^{\Gamma_0},R_{ba}]=1.\]
Otherwise, $\st(a)$ covers the whole graph so that $w\leq a\sim b$ for any vertex $w\neq a,b$ in $\Gamma$. Then we have
\[[R_{ab},R_{wb}]=[R_{wb},L_{wa}]=[L_{wa},R_{wa}]=[R_{wa},R_{ba}]=1.\]
\textbf{5-2)} The case that $a$ and $b$ are not adjacent, i.e. $\lk(a)=\lk(b)$: In this case, we show that $R_{ab}$ and $L_{ba}$ are joined by a path. If the link $\lk(a)$ is empty, then $a\sim b\leq w$ for any vertex $w\neq a,b$ in $\Gamma$. Then we have
\[[R_{ab},L_{aw}]=[L_{aw},R_{bw}]=[R_{bw},L_{ba}]=1.\]
Otherwise, choose $w\in \lk(a)$.
If $\st(w)$ does not cover the whole graph, then \[[R_{ab},P_b^{a}]=[P_b^{a},P_w^{\Gamma'}]=[P_w^{\Gamma'},P_a^{b}]=[P_a^{b},L_{ba}]=1,\]
where $\Gamma'$ is a component of $\Gamma- \st(w)$.
If $\st(w)=\Gamma$, then $a\sim b\leq w$, and so
\[[R_{ab},L_{aw}]=[L_{aw},R_{bw}]=[R_{bw},L_{ba}]=1.\]

By the above five cases, we show that any two transvections (if they exist) are joined by a path in $K$.
\vspace{1mm}

If $\Gamma$ is a complete graph, then there is no partial conjugation so that the theorem holds by the above claim. From now on, therefore, we assume that $\Gamma$ is not complete. In particular, there are at least two partial conjugations.

\vspace{1.5mm}
\noindent\textbf{Claim 2}. Any two partial conjugations $P_a^C$ and $P_b^D$ are joined by a path in $K$ for any choices of $a,b,C,D$.
\vspace{1mm}

Note that $[P_a^{C_1},P_a^{C_2}]=1$ whenever the partial conjugations are defined. It means that to see whether two partial conjugations $P_a^C$ and $P_b^D$ are joined by a path for any two distinct vertices $a$ and $b$, it is enough to check only one particular choice of $C$ and $D$. 
\vspace{1mm}

Suppose $\Gamma$ is connected. If there is no vertex in $\Gamma$ whose star is the whole graph, for any two vertices $a,b\in\Gamma$, $\Gamma - \st(a)=\Gamma_1\sqcup \cdots \sqcup \Gamma_m$ and $\Gamma - \st(b)=\Gamma_1'\sqcup \cdots \sqcup \Gamma_n'$ where each $\Gamma_i$ and $\Gamma_j'$ are components of $\Gamma- \st(a)$ and $\Gamma- \st(b)$, respectively. 
If $[a,b]=1$, this implies that $[P_a^{\Gamma_i},P_b^{\Gamma'_j}]=1$ for any $i$ and $j$ so that the claim holds by the connectivity of $\Gamma$. 

If $\Gamma=\st(b)$ for some vertex $b\in\Gamma$, then one can easily see that $P_a^C$ and $R_{ab}$ commute for any $a\in\lk(b)$ the complement of whose star has a non-empty component $C$. 
By \textbf{Claim 1}, we can deduce that any two partial conjugations are joined by a path.
\vspace{1mm}

Now, suppose $\Gamma$ has at least two components $\Gamma_1$ and $\Gamma_2$, and there are two partical conjugations $P_a^C$ and $P_b^D$ for $a\in\Gamma_1$ and $b\in\Gamma_2$. We only need to show that there are some components $C$ and $D$ of $\Gamma-\st(a)$ and $\Gamma-\st(b)$, respectively, such that $P_a^C$ and $P_b^D$ are joined by a path in $K$. There are two cases depending on the number of vertices in $\Gamma_2$.
\vspace{0.7mm}

\textbf{1)} Suppose $\Gamma_2$ has at least two vertices. There are three cases:\\
(where $\st(b)\subsetneq \Gamma_2$.) Then $[P_a^{\Gamma_2},P_b^D]=1$ where $D$ is a component of $\Gamma_2- \st(b)$. This is because
$$P_a^{\Gamma_2}(P_b^D(s))=P_a^{\Gamma_2}(bsb^{-1})=aba^{-1}\cdot asa^{-1}\cdot ab^{-1}a^{-1}=absb^{-1}a^{-1},$$
and
$$P_b^D(P_a^{\Gamma_2}(s))=P_b^D(asa^{-1})=absb^{-1}a^{-1}$$
for any vertex $s\in D$.\\
(where $\st(b)=\Gamma_2$ but $\Gamma_2$ is not a complete graph.) There are a vertex $b_1\in\Gamma_2$ and a component $D_1$ of $\Gamma_2-\st(b_1)$. 
Then $$[P_b^{\Gamma_1},P_{b_1}^D]=[P_{b_1}^D,P_a^{\Gamma_2}]=1$$ since $b$ and $b_1$ are adjacent.\\
(where $\Gamma_2$ is a complete graph.) Then we have
\[[P_{b}^{\Gamma_1},R_{b_1b}]=[R_{b_1b},P_{a}^{\Gamma_2}]=1\]
for any vertex $b_1\neq b$ in $\Gamma_2$ since $b_1\leq b$. 
\vspace{0.7mm}

\textbf{2)} Suppose $\Gamma_2$ has only one vertex $b$. There are two cases:\\
(where $\Gamma_1$ is not a complete graph.) Let $a_1$ be a vertex in $\Gamma_1$ such that $\st(a_1)\subsetneq \Gamma_1$. If $a=a_1$, then the claim holds since $[P_{a}^{C},P_b^{\Gamma_1}]=1$ where $C$ is a component of $\Gamma_1 - \st(a)$. 
If $a\neq a_1$ but $\st(a)=\Gamma_1$, then we additionally have $[P_{a_1}^{C},P_a^{\Gamma_2}]=1$ so that the claim holds. \\
(where $\Gamma_1$ is a complete graph.) We must divide into two cases again. If $\Gamma_1$ has at least two vertices, then for any vertex $a_1\neq a$ in $\Gamma_1$, $a_1\leq a$ so that
\[R_{a_1a}(P_b^{\Gamma_1}(a_1))=R_{a_1a}(ba_1b^{-1})=ba_1ab^{-1},\]
and
\[P_b^{\Gamma_1}(R_{a_1a}(a_1))=P_b^{\Gamma_1}(a_1a)=ba_1b^{-1}\cdot bab^{-1}=ba_1ab^{-1}.\]
Thus, we have
\[[P_{a}^{\Gamma_2},R_{a_1a}]=[R_{a_1a},P_b^{\Gamma_1}]=1.\]
If $\Gamma_1$ has only one vertex $a$, then we have
\[[P_a^{\Gamma_2},R_{ba}]=[R_{ab},P_{b}^{\Gamma_1}]=1.\]
Since $\Gamma$ has at least 3 vertices, there is a vertex $c$ such that $a\sim b\leq c$. Since there is a path between $R_{ab}$ and $R_{ba}$ by \textbf{Claim 1}, $P_a^{\Gamma_2}$ and $P_b^{\Gamma_1}$ are joined by a path in $K$.

\vspace{1.5mm}
\noindent\textbf{Claim 3}. Any transvection is adjacent to a partial conjugation in $K$.
\vspace{1mm}

Suppose $a$ and $b$ are vertices in $\Gamma$ such that $a\leq b$.
\vspace{0.7mm}

\textbf{1)} If $\st(b)\subsetneq \Gamma$, then $[R_{ab},P_{b}^C]=1$ for any component $C$ of $\Gamma - \st(b)$.
\vspace{0.7mm}

\textbf{2)} Suppose $\st(b)=\Gamma$. Since $\Gamma$ is not complete, there is a vertex $c$ of $\Gamma$ such that $st(c)\subsetneq \Gamma$ ($c$ may be equal to $a$). For any component $C$ of $\Gamma- \st(c)$, we have $[R_{ab},P_c^C]=1$. Therefore, there is an edge joining a transvection and a partial conjugation if they exist.

\vspace{1.5mm}
In summary, if $\Gamma$ is complete, by \textbf{Claim 1}, $K$ is connected. Otherwise, $\Aut^*(A_\Gamma)$ contains at least two partial conjugations. If it has no transvections, by \textbf{Claim 2}, $K$ is connected. If it has a transvection, by combining the three claims, we can show that $K$ is connected. 
\end{proof}

The only cases not covered by the above theorem are those of RAAG's whose defining graphs have 1 or 2 vertices. If $A_{\Gamma}$ is $\mathbb{Z}$, then $\Aut(A_\Gamma) = \Z_2$ is finite. In the remaining case, $A_{\Gamma}$ is either $\Z^2$ or $\F_2$ so that $\Aut(A_\Gamma)$ is either $\GL_2(\Z)$ or $\Aut(\F_2)$, respectively. $\GL_2(\Z)$ is hyperbolic since it is virtually free. For $\Aut(\F_2)$, consider the subgroup $\Aut^+(\F_2)$ which is the preimage of the special linear subgroup $\SL_2(\Z)\subset\GL_2(\Z)$ under the homomorphism $\Aut(\F_2)\rightarrow\GL_2(\Z)$ induced from the abelianization map $\F_2\rightarrow \Z^2$. Then $\Aut^+(\F_2)$ is a finite index subgroup of $\Aut(\F_2)$ and can be shown to be isomorphic to the pure mapping class group of a twice punctured torus, which is not relatively hyperbolic by (the proof of) Theorem 8.1 in \cite{BDM09}.
\vspace{1mm}

We conclude this section with a remark that was pointed out to us by Anthony Genevois. If $\Gamma$ is connected, there is a shorter argument to prove that $\Aut(A_\Gamma)$ is not relatively hyperbolic. 
%By Theorem \ref{VirtualMalnormal} and Lemma \ref{ParabolicSubgroup}, we only need to show that there is an infinite normal subgroup of $\Aut(A_\Gamma)$ since this subgroup should be contained in a parabolic subgroup $H$ and $H$ should be the whole group $\Aut(A_\Gamma)$. 
Suppose that $\Aut(A_\Gamma)$ is relatively hyperbolic with respect to a finite collection $\mathcal{H}=\{H_i\}$ of parabolic subgroups.
If there is a vertex $v\in\Gamma$ which is adjacent to all other vertices, then the subgroup of $\Aut(A_\Gamma)$ generated by all transvections induced by this central vertex $v$ is an infinite normal free abelian subgroup, contained in some $H_i \in \mathcal{H}$ by Lemma \ref{ParabolicSubgroup}.
Otherwise, $\Inn(A_\Gamma)$ is isomorphic to $A_\Gamma$ and it is thus an infinite normal subgroup, contained in some $H_i \in \mathcal{H}$ by Lemma \ref{ParabolicSubgroup}. In both case, we found an infinite normal subgroup contained in an almost malnormal subgroup $H_i$. By Theorem \ref{VirtualMalnormal}, this implies that $H_i = \Aut(A_\Gamma)$, which is a contradiction.

\section{Outer automorphism group}\label{4}
In this section, we look at relative hyperbolicity of $\Out(A_\Gamma)$. In the same spirit as the proof of relative hyperbolicity of $\Aut(A_\Gamma)$, we work with the finite-index subgroup $\Out^*(A_{\Gamma})$ instead of the whole group $\Out(A_{\Gamma})$. Let $S$ be the set of all transvections and partial conjugations in $\Aut(A_\Gamma)$ as before and $S'$ the set of all the (non-trivial) images of elements of $S$ in $\Out(A_{\Gamma})$. 
We want to investigate the connectivity of $K(\Out^*(A_{\Gamma}), S')$. Unfortunately, the proof of the connectivity of $K(\Aut^*(A_{\Gamma}), S)$ does not directly imply that $K(\Out^*(A_{\Gamma}), S')$ is connected since some partial conjugations (indeed, inner automorphisms) in $\Aut(A_{\Gamma})$ are sent to the identity element in $\Out(A_{\Gamma})$. For the rest of this section, when we say a transvection or a partial conjugation, we mean its image in $\Out(A_\Gamma)$.

Before proceeding, we recall some facts about the subgroup $\mathrm{PSA}(A_\Gamma)\leq\Aut(A_\Gamma)$ ($\mathrm{PSO}(A_\Gamma)\leq\Out(A_\Gamma)$, resp.) generated by partial conjugations, which is said to be the \emph{pure symmetric automorphism group} (\emph{pure symmetric outer automorphism group}, resp.) of $A_\Gamma$.
Koban--Piggott showed in \cite{KP} that $\mathrm{PSA}(A_\Gamma)$ has a group presentation whose generators are partial conjugations and relators are commutators.
Moreover, they showed that $\mathrm{PSA}(A_\Gamma)$ is isomorphic to a RAAG if and only if $\Gamma$ has no SIL-pairs (which is defined below Lemma~\ref{lem:SharedComponents}).
With a similar flavor, Day--Wade found the criterion for $\mathrm{PSO}(A_\Gamma)$ to be a RAAG \cite{day2018subspace}. 

In the study of $\mathrm{PSA}(A_\Gamma)$ or $\mathrm{PSO}(A_\Gamma)$, the most important thing is to know when two partial conjugations commute, and there is a precise description using the following fact.

\begin{lem}[Lemma 2.1 in \cite{day2018subspace}]\label{lem:SharedComponents}
Let $a$ and $b$ be nonadjacent vertices of $\Gamma$. Then the components of $\Gamma- \st(a)$ consist of $A_0,\cdots,A_k,C_1,\cdots,C_l$ and the components of $\Gamma-\st(b)$ consist of $B_0,\cdots,B_m,C_1,\cdots,C_l$ where $b\in A_0$ and $a\in B_0$, and $A_1,\cdots,A_k \subset B_0$ and $B_1,\cdots,B_m \subset A_0$.
\end{lem}

In the above lemma, $A_0$ and $B_0$ are called the \emph{dominating components}, $C_i$'s are called the \emph{shared components}, and the other components are called the \emph{subordinate components}.
We say $(a,b)$ is an \emph{SIL-pair} if $l\geq 1$.
Note that any of $k$, $m$ or $l$ can be zero; for instance, $l=0$ implies that there is no shared component.

\begin{lem}[Lemma 2.4 in \cite{day2018subspace}]\label{lem:commutingrelation}
Let $a$ and $b$ be nonadjacent vertices in $\Gamma$ such that there are non-trivial partial conjugations $P_a^C$ and $P_b^D$ in $\Out(A_\Gamma)$. Then $[P_a^C,P_b^D]\neq 1$ in $\Out(A_\Gamma)$ if and only if $(a,b)$ is an SIL-pair and one of the following conditions hold:
\begin{itemize}
\item $C$ and $D$ are the dominating components for the pair $(a, b)$.
\item One of $C$ or $D$ is dominating and the other is shared.
\item $C$ and $D$ are identical shared components.
\end{itemize}
\end{lem}

Now, let us see non-relative hyperbolicity of $\Out^*(A_\Gamma)$ when $S'$ consists of only partial conjugations, i.e. $\Out^*(A_\Gamma)=\mathrm{PSO}(A_\Gamma)$, by examining $K'=K(\Out^*(A_{\Gamma}), S')$.

\begin{prop}\label{Prop:NoTransvection}
Suppose $S'$ consists of only partial conjugations and $|S'|\geq 1$.
If there is a vertex $v\in\Gamma$ such that $\Gamma-\st(v)$ has at least three components, then $K'$ is connected.
Otherwise, $\Out^*(A_\Gamma)$ is isomorphic to the RAAG $A_{K'}$.
\end{prop}
\begin{proof}
Obviously, if there is a non-trivial partial conjugation by a vertex $v\in\Gamma$, then any two partial conjugations by $v$ commute.

Suppose there is a vertex $v$ such that $\Gamma-\st(v)$ has at least three components, and there is a non-trivial partial conjugation by $w$ for $w\neq v$.
By the first paragraph, it suffices to show that $P_v^C$ and $P_w^D$ commute for some $C$ and $D$.
If $(v,w)$ is not an SIL-pair, by Lemma~\ref{lem:commutingrelation}, any partial conjugation by $w$ commute with any partial conjugation by $v$. 
Otherwise, there is at least one shared component $C_1$ for the pair $(v,w)$.
If there is one more shared component $C_2$, by Lemma~\ref{lem:commutingrelation} we have $[P_v^{C_1},P_w^{C_2}]=1$. 
Otherwise, there is a subordinate component $C'$ of $\Gamma-\st(v)$, by Lemma~\ref{lem:commutingrelation}, we have $[P_v^{C'},P_w^{C_1}]=1$.

In \cite[Theorem B]{day2018subspace}, it is shown that $\mathrm{PSO}(A_\Gamma)$ is isomorphic to a RAAG if and only if the support graph of each vertex $v\in\Gamma$ is a forest, where the support graph is a simplicial graph whose vertices are components of $\Gamma-\st(v)$.
If there is no vertex $v$ such that $\Gamma-\st(v)$ has at least three components, then each support graph is a forest (either a single vertex or two vertices with or without an edge).
Therefore, $\Out^*(A_{\Gamma})$ is isomorphic to a RAAG, and by the results in \cite{day2018subspace} it can easily be seen that the defining graph of the RAAG is equal to $K'$.  
\end{proof}

Wiedmer showed in \cite{MW} that for any RAAG $A_\Lambda$, there exists a graph $\Gamma$ such that $A_\Lambda$, $\mathrm{PSO}(A_\Gamma)$ and $\Out^*(A_\Gamma)$ are all isomorphic.
In order to completely characterize non-relative hyperbolicity of $\Out^*(A_\Gamma)$ when $\Out^*(A_\Gamma)$ is isomorphic to $\mathrm{PSO}(A_\Gamma)$, thus, we need the following fundamental fact.

\begin{lem}\label{lem:RAAGcondition}
A RAAG $A_\Lambda$ is relatively hyperbolic if and only if its defining graph $\Lambda$ consists of either a single vertex or at least two components.
\end{lem}
\begin{proof}
If $\Lambda$ is a single vertex, then $A_\Lambda$ is isomorphic to $\Z$ and thus (relatively) hyperbolic.
If $\Lambda$ consists of at least two components, then $A_\Lambda$ is isomorphic to $A_{\Lambda_1}\ast \cdots \ast A_{\Lambda_n}$ where $\Lambda_i$'s are components of $\Lambda$; in particular, $A_\Lambda$ is relatively hyperbolic with respect to $\{A_{\Lambda_1},\cdots,A_{\Lambda_n}\}$. 

If $\Lambda$ is connected and has at least two vertices, then the commutativity graph is exactly the same as the defining graph by letting the generating set as the usual generators of RAAG.
\end{proof}

Now, we are ready to examine non-relative hyperbolicity of $\Out(A_\Gamma)$.

\begin{thm}\label{Main2}
If $\Out(A_{\Gamma})$ is infinite and not virtually a RAAG whose defining graph is either a single vertex or disconnected, then $\Out(A_{\Gamma})$ is not relatively hyperbolic.
\end{thm}
\begin{proof}
If $|S'|\leq 1$, then $\Out(A_\Gamma)$ is finite or has a finite-index subgroup isomorphic to $\mathbb{Z}$, and thus, it is (relatively) hyperbolic. 
If $\Gamma$ has only one vertex, then $\Out(A_\Gamma)$ is obviously finite. 
If $\Gamma$ has only two vertices, then $\Out(A_\Gamma)$ is isomorphic to $\GL_2(\Z)$, and thus, it is virtually the free group of rank 2.
Now we examine the commutativity graph $K' = K(\Out^*(A_{\Gamma}), S')$ for the case that $|S'|\geq 2$ and $\Gamma$ has at least 3 vertices.

If $S'$ does not have any transvection, by Proposition~\ref{Prop:NoTransvection} and Lemma~\ref{lem:RAAGcondition}, $\Out^*(A_\Gamma)$ is not relatively hyperbolic if and only if $\Out^*(A_\Gamma)$ is isomorphic to a RAAG whose defining graph is connected.

Now, we assume that there is at least one transvection in $S'$. 
%Note that for $a,b\in\Gamma$, if $a\leq b$, then $R_{ab}=L_{ab}$ in $\Out(A_\Gamma)$.
\vspace{1mm}

\noindent\textbf{Claim A}. As long as they exist, any non-trivial partial conjugation and any transvection are joined by a path in $K'$ unless $\Out^*(A_{\Gamma})$ is isomorphic to $\Aut^*(\F_2)$.  
\vspace{1mm}

Let $R_{ab}$ be a transvection and suppose that there is a non-trivial partial conjugation $P_c^C$. 
As in the paragraph below Claim 1 in the proof of Theorem~\ref{Main1}, we have $[R_{ab},L_{ab}]=1$ whenever $R_{ab}$ is equal to $L_{ab}$ or not. Thus, we will show the existence of a path joining $R_{ab}$ and $P_c^C$ in $K'$. There are four cases, depending on $c$ and the adjacency of $a$ and $b$.
\vspace{0.7mm}

\textbf{\rom{1})} If $c=b$, then $[R_{ab},P_{c}^C]=1$ whenever $C$ contains $a$ or not.
\vspace{0.7mm}

\textbf{\rom{2})} Suppose that $c$ is neither $a$ nor $b$. 
If $a$ or $b$ is contained in $\lk(c)$, then $a\leq b$ implies that $b\in\lk(c)$ and thus $[P_c^C,R_{ab}]=1$ for any component $C$.
If $a$ and $b$ are in the same component $C'$ of $\Gamma- \st(c)$, then we have $[P_c^{C'},R_{ab}]=1$, which implies that the claim is true since $[P_c^{C'},P_c^C]=1$.
%since
%\begin{gather*}
%P_c^C (R_{ab}(a))=P_c^C(ab)=cac^{-1}\cdot cbc^{-1}=cabc^{-1}, \text{ and }\\
%R_{ab}(P_c^C (a))=R_{ab}(cac^{-1})=cabc^{-1}.
%\end{gather*}
If $a$ and $b$ are contained in different components of $\Gamma - \st(c)$, then $a\leq b$ implies that $a\leq c$. Since $P_c^C$ and $R_{ac}$ are joined in $K'$ by an edge by the case \textbf{\rom{1}} and we have $[R_{ab},L_{ac}]=[L_{ac},R_{ac}]=1$, the claim holds.
\vspace{0.7mm}

\textbf{\rom{3})} Suppose $c=a$ and $a$ and $b$ are adjacent. If there is a non-trivial element $P_b^D$ in $\Out(A_{\Gamma})$, then $[P_a^C,P_b^D]=[P_b^D,R_{ab}]=1$, and thus, $P_c^C$ and $R_{ab}$ are joined by a path. 
Otherwise, there exists a component $C'$ of $\Gamma-\st(a)$ contained in $\lk(b)$, which implies $[R_{ab},P_a^{C'}]=1$ and thus the claim holds. 
\vspace{0.7mm}

\textbf{\rom{4})} Suppose $c=a$ but $a$ and $b$ are nonadjacent.
Since $a\leq b$, there is no subordinate component of $\Gamma-\st(a)$ for the pair $(a,b)$.
If $\Gamma-\st(a)$ has at least three components, then there are at least two shared components, say $C_1$ and $C_2$.
Since we have $[P_a^{C_1},P_b^{C_2}]=[P_b^{C_2},R_{ab}]=1$ by Lemma~\ref{lem:commutingrelation} and the case \textbf{\rom{1}}, the claim holds.

Now, suppose $\Gamma-\st(a)$ has two components, the shared component $C'$ and the dominating component $C''$.
If $\Gamma-\st(b)$ has a subordinate component $D$, then the claim holds since $[P_a^{C'},P_b^D]=[P_b^D,R_{ab}]=1$ by Lemma~\ref{lem:commutingrelation} and the case \textbf{\rom{1}}.
Otherwise, there are two situations depending on the existence of a vertex $x\in \lk(b)-\lk(a)$. 
If such a vertex $x$ exists, then $x\leq b$ and thus the claim holds since $[R_{ab},R_{xb}]=[R_{xb},P_a^{C'}]=1$. Otherwise, $a\sim b$ (in particular, $C''$ becomes $\{b\}$) and we have final two cases.
\begin{enumerate}
\item Suppose there is a vertex $d$ in $C'$, which defines a non-trivial partial conjugation in $\Out(A_\Gamma)$. 
If $\lk(a)\subseteq\lk(d)$, then $a\sim b\leq d$ and thus the claim holds since $[R_{ab},L_{ad}]=[L_{ad},R_{bd}]=1$ and there is a path joining $R_{bd}$ and $P_{a}^{C'}$ by the case \textbf{\rom{2}}.  
Otherwise, $C''$ is a subordinate component of $\Gamma-\st(a)$ for the pair $(a,d)$. By Lemma~\ref{lem:commutingrelation}, then any partial conjugation $P_d^{D}$ by $d$ commute with $P_a^{C''}$. Since there is a path joining $P_d^{D}$ and $R_{ab}$ by the case \textbf{\rom{2}}, the claim holds. 

\item Suppose there is no such a vertex $d$. In this case, only $a$, $b$, and vertices in $\lk(a)(=\lk(b))$ may be able to define non-trivial partial conjugations in $\Out(A_\Gamma)$. If $x\in \lk(a)$ does, then any non-trivial partial conjugation $P_x^X$ commutes with $P_c^C$ and $R_{ab}$, and thus the claim holds.
Otherwise, we need to see whether there is a transvection $R_{vw}$ different from $R_{ab}$ or $R_{ba}$. 

Suppose there is such a transvection $R_{vw}$. 
If $\{v,w\}\cap \{a,b\}=\emptyset$, then $[R_{ab},R_{vw}]=1$. Since there is a path joining $R_{vw}$ and $P_a^C$ by the case \textbf{\rom{2}}, the claim holds. 
If $v$ is either $a$ or $b$ (in particular, $a\sim b\leq w$ and $w\neq a,b$), since $w$ must not define a non-trivial partial conjugation, $w$ is adjacent to both $a$ and $b$. Since $[R_{ab},L_{aw}]=[L_{aw},R_{aw}]=1$ and there is a path joining $R_{aw}$ and $P_a^{C}$ by the case \textbf{\rom{3}}, the claim holds.
If $w$ is either $a$ or $b$ (in particular, $v\leq a\sim b$ and $v\neq a,b$), by the case \textbf{\rom{1}}, there is a path joining $R_{va}$ and $P_a^C$. Since $[R_{va},L_{vb}]=[L_{vb},R_{ab}]=1$, the claim holds.

Finally, if there is no such a transvection $R_{vw}$, then $a$ and $b$ are the only vertices defining non-trivial partial conjugations and each of $\Gamma-\st(a)$ and $\Gamma-\st(b)$ has two components. In particular, $S'$ consists of two partial conjugations and four (two right and two left) transvections.
In this case, $K'$ is discrete and $\Out^*(A_\Gamma)$ is isomorphic to $\Aut^*(\F_2)$. See Figure~\ref{Example2}.
\end{enumerate}

In summary, we checked that if there are a transvection and a partial conjugation in $S'$ which cannot be joined by a path in $K'$, then $\Out^*(A_\Gamma)$ is isomorphic to $\Aut^*(\mathbb{F}_2)$. 
\vspace{1mm}

\noindent\textbf{Claim B}. Every pair of transvections can be joined by a path in $K'$ unless $\Out^*(A_{\Gamma})$ is isomorphic to $\SL_2(\Z)$ or $\Aut^*(\F_2)$.
\vspace{1mm}

Since we assumed that $\Gamma$ has at least 3 vertices, the existence of a path in $K(\Aut^*(A_\Gamma),S)$ between two transvections appeared while proving Claim 1 in the proof of Theorem~\ref{Main1} tells us that of a path in $K(\Out^*(A_\Gamma),S')$ between the two transvections, except between $R_{ab}$ and $R_{ba}$; the path joining them in $K(\Aut^*(A_\Gamma),S)$ may use partial conjugations which have trivial images in $\Out(A_\Gamma)$. 

If there is another transvection $R_{vw}$, by the cases 1,2,3 and 4 of the proof of Claim 1 in the proof of Theorem~\ref{Main1}, there is a path joining $R_{vw}$ to $R_{ab}$ (and $R_{ba}$) in $K'$, and thus $R_{ab}$ and $R_{ba}$ are joined by a path.
If there is no other transvection but there is a non-trivial partial conjugation, by Claim A, there is a path joining $R_{ab}$ to $R_{ba}$ in $K'$ except when $\Out^*(A_\Gamma)$ is isomorphic to $\Aut^*(\F_2)$.
Lastly, if $S'=\{R_{ab},L_{ab},R_{ba},L_{ba}\}$, then any partial conjugation by $a$ or $b$ must be the identity in $\Out(A_\Gamma)$, which implies that $R_{ab}=L_{ab}$ and $R_{ba}=L_{ab}$. Thus $\Out^*(A_\Gamma)$ is isomorphic to $\Out^*(\F_2)$, which is $\SL_2(\Z)$.

\vspace{1mm}

By the previous two claims, as long as $|S'|>1$ and there is at least one transvection in $S'$, $K'$ is connected and by Theorem~\ref{Criterion} we get non-relative hyperbolicity of $\Out^*(A_\Gamma)$ unless $\Out^*(A_{\Gamma})$ is isomorphic to neither $\Aut^*(\F_2)$ nor $\SL_2(\Z)$. 
Since $\Aut^*(\F_2)$ is non-relative hyperbolic as explained in the paragraph below the proof of Theorem~\ref{Main1}, we conclude the proof. 
\end{proof}

We finish our paper by giving some examples of RAAGs such that their outer automorphism groups are relatively hyperbolic. If the graph $\Gamma$ is a cycle with $n$ vertices and $n$ edges, then $\Out(A_{\Gamma})$ is a finite group, as long as $n\geq 5$. 
If $\Gamma$ is the graph on the left in Figure \ref{Example}, then there are no transvections, and the central vertex induces the unique partial conjugation in $\Out(A_{\Gamma})$, which is thus virtually cyclic. 
If $\Gamma$ is the graph on the right in Figure \ref{Example}, then there is no partial conjugation in $\Out(A_{\Gamma})$, and the two equivalent vertices on the top and the bottom induce two transvections in $\Out(A_{\Gamma})$. By the argument in the last paragraph of the proof of Theorem \ref{Main2}, $\Out(A_{\Gamma})$ is virtually isomorphic to $\SL_2(\Z)$.

\begin{figure}[H]
\centering
\begin{tikzpicture}
\draw[black, thick] (-1/2,-0.866) -- (1/2,0.866);
\draw[black, thick] (-1/2,0.866) -- (1/2,-0.866);
\draw[black, thick] (-3/2,0.866) -- (3/2,0.866);
\draw[black, thick] (-3/2,-0.866) -- (3/2,-0.866);
\draw[black, thick] (-2,0) -- (-3/2,0.866);
\draw[black, thick] (-2,0) -- (-3/2,-0.866);
\draw[black, thick] (2,0) -- (3/2,0.866);
\draw[black, thick] (2,0) -- (3/2,-0.866);
\filldraw[black] (0,0) circle (1.5pt);
\filldraw[black] (2,0) circle (1.5pt);
\filldraw[black] (-2,0) circle (1.5pt);
\filldraw[black] (-3/2,0.866) circle (1.5pt);
\filldraw[black] (-1/2,0.866) circle (1.5pt);
\filldraw[black] (1/2,0.866) circle (1.5pt);
\filldraw[black] (3/2,0.866) circle (1.5pt);
\filldraw[black] (-3/2,-0.866) circle (1.5pt);
\filldraw[black] (-1/2,-0.866) circle (1.5pt);
\filldraw[black] (1/2,-0.866) circle (1.5pt);
\filldraw[black] (3/2,-0.866) circle (1.5pt);
\end{tikzpicture}
\qquad \qquad
\begin{tikzpicture}[scale = 1.3]
\draw[black, thick] (-1/3,0.65) -- (0,2);
\draw[black, thick] (1/3,0.65) -- (0,2);
\draw[black, thick] (1/3,0.65) -- (-1/3,0.65);
\draw[black, thick] (1/3,0.65) -- (2/3,1);
\draw[black, thick] (2/3,1) -- (0,2);
\draw[black, thick] (-1/3,0.65) -- (-2/3,1);
\draw[black, thick] (-2/3,1) -- (0,2);
\draw[black, thick] (1/3,0.65) -- (0,2);
\draw[black, thick] (0,5/4) -- (0,2);
\draw[black, thick] (0,5/4) -- (2/3,1);
\draw[black, thick] (0,5/4) -- (-2/3,1);
\draw[black, thick] (0,5/4) -- (0,0);
\draw[black, thick] (2/3,1) -- (0,0);
\draw[black, thick] (-2/3,1) -- (0,0);
\draw[black, thick] (1/3,0.65) -- (0,0);
\draw[black, thick] (-1/3,0.65) -- (0,0);
\filldraw[black] (0,2) circle (1pt);
\filldraw[black] (-1/3,0.65) circle (1pt);
\filldraw[black] (1/3,0.65) circle (1pt);
\filldraw[black] (-2/3,1) circle (1pt);
\filldraw[black] (2/3,1) circle (1pt);
\filldraw[black] (0,5/4) circle (1pt);
\filldraw[black] (0,0) circle (1pt);
\end{tikzpicture}
\hfill

\caption{Typical examples of $\Gamma$ with $\Out(A_{\Gamma})$ relatively hyperbolic.}
\label{Example}
\end{figure}

\begin{figure}[H]
\centering
\begin{tikzpicture}[scale = 1.3]
\draw[black, thick] (1,0) -- (0,0.3);
\draw[black, thick] (1,0) -- (0,-0.3);
\draw[black, thick] (-1,0) -- (0,0.3);
\draw[black, thick] (-1,0) -- (0,-0.3);
\draw[black, thick] (-2/3,1) -- (-1,0);
\draw[black, thick] (2/3,1) -- (1,0);
\draw[black, thick] (-2/3,1) -- (2/3,1);
\filldraw[black] (1,0) circle (1.5pt);
\filldraw[black] (-1,0) circle (1.5pt);
\filldraw[black] (0,0.3) circle (1.5pt);
\filldraw[black] (0,-0.3) circle (1.5pt);
\filldraw[black] (2/3,1) circle (1.5pt);
\filldraw[black] (-2/3,1) circle (1.5pt);
\end{tikzpicture}
\hfill
\caption{The graph $\Gamma$ where $\Out^*(A_\Gamma)$ is isomorphic to $\Aut^*(\F_2)$.}
\label{Example2}
\end{figure}

\medskip
\bibliographystyle{alpha} 
\bibliography{NRH}

\end{document}